\providecommand{\U}[1]{\protect\rule{.1in}{.1in}}
\providecommand{\U}[1]{\protect\rule{.1in}{.1in}}
\newtheorem{theorem}{Theorem}
\newtheorem{definition}[theorem]{Definition}
\newtheorem{proposition}[theorem]{Proposition}
\newenvironment{proof}[1][Proof]{\noindent\textbf{#1.} }{\ \rule{0.5em}{0.5em}}
\begin{document}

\title{Automorphism Groups of Cyclic $p$-gonal Pseudo-real Riemann Surfaces}
\author{Emilio Bujalance\thanks{ Partially supported by MTM2014-55812.}\\Departamento de Matem\'{a}ticas Fundamentales\\Facultad de Ciencias, UNED\\Senda del rey, 9, 28040 Madrid (Spain)\\eb@mat.uned.es
\and Antonio F. Costa$^{\ast}$\\Departamento de Matem\'{a}ticas Fundamentales\\Facultad de Ciencias, UNED\\Senda del rey, 9, 28040 Madrid (Spain)\\acosta@mat.uned.es}
\date{}
\maketitle

\textbf{Abstract.} In this article we prove that the full automorphism group
of a cyclic $p$-gonal pseudo-real Riemann surface of genus $g$ is either a
semidirect product $C_{n}\ltimes C_{p}$ or a cyclic group, where $p$ is a
prime $>2$ and $g>(p-1)^{2}$. We obtain necessary and sufficient conditions
for the existence of a cyclic $p$-gonal pseudo-real Riemann surface with
full\ automorphism group isomorphic to a given finite group. Finally we
describe some families of cyclic $p$-gonal pseudo-real Riemann surfaces where
the order of the full automorphism group is maximal and show that such
families determine some real 2-manifolds embbeded in the branch locus of
moduli space.

{\small Submitted 7/07/14. This is a revised version 13/03/15.}

\textbf{Mathematics Subject Classification (2000):}\ 30F10, 14H55, 30F50

\textbf{Keywords:}\ Riemann surface, Automorphism group, $p$-gonal Riemann
surface, Moduli space.

\section{Introduction}

A Riemann surface is called \textit{pseudo-real}\emph{\/ }if it admits
anticonformal automorphisms but no anticonformal involution. Pseudo-real
Riemann surfaces appear in a natural way in the study of the moduli space
$\mathcal{M}_{g}^{K}$ of Riemann surfaces considered as Klein surfaces. The
moduli space $\mathcal{M}_{g}$ of Riemann surfaces of genus $g$ is a two-fold
branch covering of $\mathcal{M}_{g}^{K}$ and the preimage of the branch locus
consists of the Riemann surfaces admitting anticonformal automorphisms; they
are either real Riemann surfaces (admitting anticonformal involutions, see for
instance \cite{N}) or pseudo-real Riemann surfaces.

Pseudo-real Riemann surfaces are those Riemann surfaces which are equivalent
to their conjugate but the equivalence is not realized by an involution;
therefore they admit anticonformal automorphims of order greater than $2$. The
square of such automorphisms are not trivial conformal automorphisms, which
means that the points in $\mathcal{M}_{g}$ corresponding to pseudo-real
Riemann surfaces are in the singular set (branch locus) of the orbifold
$\mathcal{M}_{g}=\mathbb{T}_{g}/\mathrm{Mod}_{g}$.

Pseudo-real Riemann surfaces were first studied in \cite{E} and \cite{S}.
Hyperelliptic pseudo-real Riemann surfaces were considered in \cite{S3},
\cite{BCn}, and \cite{BT} and pseudo-real Riemann surfaces with cyclic
automorphism groups in \cite{Et}. In \cite{BCC} the existence of pseudo-real
Riemann surfaces of every genus $g\geq2$ \ is established and this is followed
by a study of pseudo-real Riemann surfaces of genus $2$ and $3$; in \cite{BCs}
all the topological types of actions of the automorphisms groups for these two
genera and genus $4$ are described. In \cite{H1} and \cite{H2} the author
finds explicit equations for non-hyperelliptic pseudo-real Riemann surfaces.

The $p$-gonal surfaces are cyclic $p$-fold coverings of the Riemann sphere and
there is a great deal of interest in the study of the automorphism groups of
these surfaces (see for instance \cite{B}, \cite{BCI}, \cite{BHS}, \cite{Ko},
\cite{W}). The groups of (conformal and anticonformal) automorphisms of
$p$-gonal real Riemann surfaces are obtained in \cite{BCI}, under the
assumption that the $p$-gonal morphism is normal; for the conformal
automorphisms in the non-normal case see \cite{W}.

In the present work, we study the full groups of (conformal and anticonformal)
automorphisms of pseudo-real Riemann surfaces of genus $g$ that are cyclic
$p$-gonal, where $p$ is a prime $>2$ and $g>(p-1)^{2}$. In these conditions we
establish that there are only two possible types of full automorphism groups
of cyclic $p$-gonal pseudo-real Riemann surfaces: they are either cyclic or
semidirect product of cyclic groups $C_{n}\ltimes C_{p}$ (Theorem
\ref{mainthm}). We also obtain necessary and sufficient conditions for the
existence of cyclic $p$-gonal pseudo-real Riemann surfaces with given full
automorphism group.

In section 4 we obtain the maximal order of the automorphism groups of
pseudo-real cyclic $p$-gonal Riemann surfaces of genus $g>(p-1)^{2}$, when
$p-1$ divides $g$.

Finally, in section 5, we apply our results to describe some 2-manifolds
embedded in the branch locus of moduli space corresponding to cyclic $p$-gonal
pseudo-real Riemann surfaces with maximal symmetry.

\textbf{Acknowledgements.} We would like to express our thanks to Marston
Conder for advises about metacyclic groups. The authors are indebted to the
referee for careful reading of this article, many helpful remarks that improve
the quality of the exposition and to point out an error in a preliminar
version of Theorem \ref{mainthm}.

\section{Non-Euclidean crystallographic groups and Pseudo-real Riemann
surfaces}

A \textit{non-Euclidean crystallographic group} (or \textit{NEC group}) is a
discrete group of isometries of the hyperbolic plane $\mathbb{D}$ (we consider
the unit disc model). We shall assume that an NEC group has compact orbit
space. If $\Gamma$ is such a group, its algebraic structure is determined by
its signature%
\begin{equation}
(h;\pm;[m_{1},\ldots,m_{r}];\{(n_{11},\ldots,n_{1s_{1}}),\overset{(k)}{\ldots
},(n_{k1},\ldots,n_{ks_{k}})\}). \label{sign}%
\end{equation}

The orbit space $\mathbb{D}/\Gamma$ is a surface, possibly with boundary. The
number $h$ is called the\textit{\ genus} of $\Gamma$ and equals the
topological genus of $\mathbb{D}/\Gamma$, while $k$ is the number of its
boundary components; the sign is $+$ or $-$ according to whether or not the
surface $\mathbb{D}/\Gamma$ is orientable. The integers $m_{i}\geq2$ are
called the \textit{proper periods}, and represent the branch indices over
interior points of $\mathbb{D}/\Gamma$ in the natural projection
$\pi:\mathbb{D}\rightarrow\mathbb{D}/\Gamma$. The bracketed expressions
$(n_{i1},\ldots,n_{is_{i}})$, some or all of them may be empty (with $s_{i}%
=0$), are called \textit{period cycles} and represent the branchings over the
${i}^{\mathrm{th}}$ hole in the surface, and the numbers $n_{ij}\geq2$ are
named \textit{link periods}.

There exists, associated with each signature, a canonical presentation for the
group $\Gamma$, and a formula for the hyperbolic area of its fundamental
domain (see \cite{BEGG}). If the signature has sign $+$ then $\Gamma$ has the
following generators:

\qquad$x_{1},\ldots,x_{r}$ \ (elliptic transformations),

\qquad$c_{10}, \ldots, c_{1s_{1}}, \ldots, c_{k0}, \ldots, c_{ks_{k}} $ \ (reflections),

\qquad$e_{1},\ldots,e_{k}$ \ (boundary transformations),

\qquad$a_{1},b_{1},\ldots,a_{g},b_{g}$ \ (hyperbolic transformations);

\noindent and these generators satisfy the relations

\qquad$x_{i}^{m_{i}} = 1$ \ (for $1 \leq i \leq r$),

\qquad$c_{ij-1}^{2}=c_{ij}^{2}= (c_{ij-1}c_{ij})^{n_{ij}}=1 , \ c_{is_{i}%
}=e_{i}^{-1}c_{i0}e_{i}$ \ (for $1 \leq i \leq k, 0 \leq j \leq s_{i} ), $

\qquad$x_{1}\ldots x_{r}e_{1}\ldots e_{k}a_{1}b_{1}a_{1}^{-1}b_{1}^{-1}\ldots
a_{h}b_{h}a_{h}^{-1}b_{h}^{-1}=1$ (long relation)

If the sign is $-$ then we just replace the hyperbolic generators $a_{i}%
,b_{i}$ by glide reflections $d_{1},\ldots,d_{h}$, and the long relation by
$x_{1}\ldots x_{r}e_{1}\ldots e_{k}d_{1}^{2}\ldots d_{h}^{2}=1.$

The hyperbolic area of an arbitrary fundamental region of an NEC group
$\Gamma$ with signature (\ref{sign}) is%
\[
\mu(\Gamma)=2\pi\left(  \varepsilon h-2+k+\sum_{i=1}^{r}{\left(  1-{\frac
{1}{m{_{i}}}}\right)  }+{\frac{1}{2}}\sum_{i=1}^{k}\sum_{j=1}^{s_{i}}{\left(
1-{\frac{1}{n{_{ij}}}}\right)  }\right)
\]
where $\varepsilon=2$ if the sign is $+$, and $\varepsilon=1$ if the sign is
$-$. Furthermore, any discrete group $\Lambda$ of isometries of $\mathbb{D}$
containing $\Gamma$ as a subgroup of finite index is also an NEC group, and
the hyperbolic area of a fundamental region for $\Lambda$ is given by the
Riemann-Hurwitz formula:

\begin{center}
$[\Lambda:\Gamma]=\mu(\Gamma)/\mu(\Lambda).$
\end{center}

For any NEC group $\Lambda$, the \textit{canonical Fuchsian subgroup
\/}${\Lambda}^{+}$ is its subgroup of orientation-preserving elements. If
$\Lambda^{+}\neq\Lambda$ then $\Lambda^{+}$ has index $2$ in $\Lambda$, and we
say that $\Lambda$ is a \textit{proper} NEC group.

Let $X$ be a compact Riemann surface of genus $g>1$. Then there is a surface
Fuchsian group $\Gamma$, which is an NEC group with signature
$(g;+;[-];\{-\})$, such that $X=\mathbb{D}/\Gamma$, and the full (conformal
and anticonformal) automorphism group Aut$(X)$ of $X$ is isomorphic to
$\Delta/\Gamma$, where $\Delta$ is an NEC group normalizing $\Gamma$. We
denote by Aut$^{+}(X)$ the group $\Delta^{+}/\Gamma$ of all conformal
automorphisms of $X$.

\begin{definition}
A Riemann surface is called \textrm{pseudo-real \/}if it admits anticonformal
automorphisms but no anticonformal involution.
\end{definition}

In \cite{BCC} we have established some basic results on the automorphism
groups of pseudo-real Riemann surfaces

\begin{proposition}
\cite{BCC}\label{4divides} Let $X$ be a pseudo-real Riemann surface, and let
$G$ be the full automorphism group of $X$. Then $4$ divides the order of $G$.
\end{proposition}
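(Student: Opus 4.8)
The plan is to exploit the orientation homomorphism $\varepsilon\colon G\to\{\pm1\}$ that sends a conformal automorphism to $+1$ and an anticonformal one to $-1$; this is a group homomorphism because the composite of two anticonformal maps is conformal, while a conformal composed with an anticonformal map is anticonformal. Its kernel is precisely $G^{+}=\mathrm{Aut}^{+}(X)$. Since $X$ is pseudo-real it certainly admits anticonformal automorphisms, so $\varepsilon$ is surjective and $[G:G^{+}]=2$. Thus it suffices to produce a single element of $G$ whose order is divisible by $4$, since then $4$ divides $|G|$ by Lagrange's theorem.

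The strategy is to show that \emph{every} anticonformal automorphism $\sigma\in G$ has order divisible by $4$. First I would argue that the order of $\sigma$ is even: because $\varepsilon(\sigma)=-1$ has order $2$ in $\{\pm1\}$, the order of $\varepsilon(\sigma)$ divides the order of $\sigma$, so $2\mid\mathrm{ord}(\sigma)$. (Equivalently, if $\sigma$ had odd order $m$ then choosing $k$ with $2k\equiv1\pmod{m}$ gives $\sigma=(\sigma^{2})^{k}$, which is conformal, a contradiction.) Write $\mathrm{ord}(\sigma)=2n$ with $n\geq1$. Since $X$ is pseudo-real, $\sigma$ is not an anticonformal involution, so $\mathrm{ord}(\sigma)\neq2$ and hence $n\geq2$.

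Next I would rule out odd $n$ using the defining property of pseudo-reality. Consider the element $\sigma^{n}$. It has order exactly $2$ because $\mathrm{ord}(\sigma)=2n$, and its type is governed by $\varepsilon(\sigma^{n})=\varepsilon(\sigma)^{n}=(-1)^{n}$. If $n$ were odd, then $\varepsilon(\sigma^{n})=-1$, so $\sigma^{n}$ would be an anticonformal involution, contradicting the assumption that $X$ admits no anticonformal involution. Therefore $n$ is even, and consequently $4\mid 2n=\mathrm{ord}(\sigma)$. Applying this to any anticonformal $\sigma\in G$ (which exists since $X$ is pseudo-real) and invoking Lagrange's theorem yields $4\mid|G|$.

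I do not expect a genuine obstacle here: the argument is purely group-theoretic once the orientation homomorphism is set up, and the only point requiring care is the parity analysis of $\sigma^{n}$, where one must keep track of which powers of an anticonformal element are conformal versus anticonformal. The pseudo-real hypothesis is used exactly twice---once to guarantee that an anticonformal element exists (giving $[G:G^{+}]=2$) and once to forbid anticonformal involutions (forcing $n$ even)---and it is this second use that upgrades ``even order'' to ``order divisible by $4$.''
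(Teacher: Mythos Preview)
Your argument is correct. The orientation homomorphism is well defined, surjectivity follows from the existence of an anticonformal automorphism, and the parity analysis of $\sigma^{n}$ is exactly what forces $4\mid\mathrm{ord}(\sigma)$ once anticonformal involutions are forbidden. (One minor redundancy: the separate observation that $n\geq 2$ is not needed, since once you prove $n$ is even you already have $n\geq 2$ and $4\mid 2n$.)

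As for comparison with the paper: the paper does not prove this proposition at all. It is stated with a citation to \cite{BCC} and used as a black box. Your argument is the standard one and is essentially the proof given in that reference: pick an anticonformal element, note its order is even, and observe that half its order cannot be odd lest a power be an anticonformal involution. So there is no meaningful methodological difference to discuss---you have supplied precisely the short proof the paper chose to omit.
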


\begin{proposition}
\cite{BCC}\label{signatureDelta} Suppose the pseudo-real surface $X$ is
conformally equivalent to $\mathbb{D}/\Gamma$, where $\Gamma$ is a surface
Fuchsian group which is normalized by an NEC group $\Delta$ such that
$\Delta/\Gamma\cong G=\mathrm{Aut}(X)$. Then the signature of $\Delta$ has the
form $(h;-;[m_{1},...,m_{r}])$ and the signature of the canonical Fuchsian
subgroup $\Delta^{+}$ of $\Delta$ is
\[
(h-1;+;[m_{1},m_{1},m_{2},m_{2},...,m_{r},m_{r}]).
\]

\end{proposition}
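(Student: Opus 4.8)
The plan is to
analyze the structure of a pseudo-real surface by exploiting the fact that its
automorphism group $G$ contains an anticonformal element, but none of order
$2$. First I would set $X = \mathbb{D}/\Gamma$ with $\Gamma$ a surface Fuchsian
group of signature $(g;+;[-];\{-\})$, and take $\Delta$ to be the NEC group
normalizing $\Gamma$ with $\Delta/\Gamma \cong G = \mathrm{Aut}(X)$. Since $X$
is pseudo-real it admits anticonformal automorphisms, so $\Delta$ properly
contains its canonical Fuchsian subgroup $\Delta^{+}$; that is, $\Delta$ is a
proper NEC group and $[\Delta:\Delta^{+}]=2$. The key negative hypothesis is
that $X$ admits no anticonformal involution, which translates into a purely
group-theoretic statement about $\Delta$: $\Delta$ contains no orientation-reversing
element whose square lies in $\Gamma$, equivalently $G$ has no anticonformal
involution.

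The central step is to show that the signature of $\Delta$ has no reflections
and no period cycles, and has sign $-$. For the reflections: a reflection $c$
in $\Delta$ is an orientation-reversing isometry of order $2$, so its image in
$G = \Delta/\Gamma$ would be an anticonformal involution (note $c \notin
\Gamma$ since $\Gamma$ is orientation-preserving and torsion-free), contradicting
pseudo-reality. The canonical presentation shows reflections are exactly the
generators attached to period cycles, so the absence of anticonformal
involutions forces $k=0$, i.e.\ the empty bracket $\{-\}$, and the surface
$\mathbb{D}/\Delta$ has no boundary. Next, for the sign: if the sign of
$\Delta$ were $+$, then every orientation-reversing element of $\Delta$ would
arise from the proper part, but more to the point I would argue that a sign $+$
together with no period cycles would make $\Delta$ itself a Fuchsian group
(orientation-preserving), contradicting that $X$ is pseudo-real and hence has
anticonformal automorphisms; so the sign must be $-$. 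Thus the signature of
$\Delta$ is $(h;-;[m_{1},\ldots,m_{r}])$ for some $h$ and proper periods
$m_{i}$, as claimed.

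The final step computes the signature of $\Delta^{+}$. With $\Delta$ of
signature $(h;-;[m_{1},\ldots,m_{r}])$, I would apply the standard formula for
passing to the canonical Fuchsian subgroup of an NEC group with sign $-$ and no
period cycles. The index-$2$ orientation cover of a non-orientable surface of
genus $h$ with $r$ cone points is the orientable double cover: each of the $r$
cone points has two preimages in $\Delta^{+}$ (since the elliptic generators
$x_{i}$ are orientation-preserving and thus lie in $\Delta^{+}$, but the
subgroup picks up a second conjugate cone point under the deck transformation),
giving proper periods $m_{1},m_{1},\ldots,m_{r},m_{r}$; and the non-orientable
genus $h$ lifts to orientable genus $h-1$ by the relation
$\varepsilon_{+}(2g_{+}) = \varepsilon_{-}h$ for the Euler-characteristic
bookkeeping, where the orientation double cover of a non-orientable surface of
genus $h$ has orientable genus $h-1$. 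This yields the signature
$(h-1;+;[m_{1},m_{1},m_{2},m_{2},\ldots,m_{r},m_{r}])$. A clean way to confirm
the period pattern is to verify the Riemann--Hurwitz relation
$\mu(\Delta^{+}) = 2\,\mu(\Delta)$ holds for the proposed signature, which
serves as an arithmetic consistency check.

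I expect the main obstacle to be the rigorous derivation of the doubling of the
proper periods and the genus drop when passing to $\Delta^{+}$. The bookkeeping
for how cone points and the non-orientable genus behave under the canonical
orientation cover is the delicate part: one must track exactly how the
generators $x_{i}$ and the glide reflections $d_{j}$ sit inside $\Delta^{+}$ and
how conjugation by an orientation-reversing element permutes the resulting
conjugacy classes of elliptic elements. This is most safely handled by invoking
the known general formulas relating the signature of a proper NEC group to that
of its canonical Fuchsian subgroup (as recorded in \cite{BEGG}) and then
specializing to the case sign $-$, $k=0$, rather than re-deriving the cover by
hand; the absence of period cycles simplifies the general formula considerably,
leaving only the genus and proper-period transformations to record.
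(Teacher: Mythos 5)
Your argument is correct: reflections in $\Delta$ would project to anticonformal involutions of $X$ (forcing $k=0$), sign $+$ with no period cycles would make $\Delta$ Fuchsian and hence $X$ would admit no anticonformal automorphisms (forcing sign $-$), and the signature of $\Delta^{+}$ then follows from the standard formula for the canonical Fuchsian subgroup in \cite{BEGG}, consistent with $\mu(\Delta^{+})=2\mu(\Delta)$. The paper itself gives no proof here, citing \cite{BCC}, and your reasoning is exactly the standard argument used there.
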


\section{Automorphism groups of cyclic $p$-gonal pseudo-real Riemann surfaces}

In this section we establish our main theorem on the algebraic structure of
the full automorphism groups of cyclic $p$-gonal pseudo-real Riemann surfaces:

\begin{theorem}
\label{mainthm} Let $X$ be a cyclic $p$-gonal ($p$ is a prime $>2$)
pseudo-real Riemann surface of genus $g>(p-1)^{2}$. Let $G$ be the full
automorphism group of $X$, $H\cong C_{p}$ be the subgroup of $G$ generated by
a $p$-gonality automorphism and $\Delta$ be an NEC\ group uniformizing $X/G$.
Hence the genus $g$ of $X$ must be even and the possible isomorphy types for
the group $G$ are:

1. $G\cong C_{n}\ltimes_{r}H$, where $4$ divides $n$, the first factor is
generated by an anticonformal automorphism and
\begin{equation}
C_{n}\ltimes_{r}H=\left\langle x,y:x^{p}=1;y^{n}=1;y^{-1}xy=x^{r}\right\rangle
\label{PresenMeta}%
\end{equation}
where $0<r<p$, $r^{n}\equiv1\operatorname{mod}p$. The NEC\ group $\Delta$ has
signature either:%
\begin{equation}
(1;-;[p,\overset{\frac{2(g+p-1)}{n(p-1)}}{...},p,n/2]) \label{signature1}%
\end{equation}
or, if in the presentation (\ref{PresenMeta}) $r=1$ (direct product) or
$r=p-1$, the signature of $\Delta$ may be also:%
\begin{equation}
(1;-;[p,\overset{\frac{2g}{n(p-1)}}{...},p,\frac{n}{2}p]) \label{signature2}%
\end{equation}

2. $G\cong C_{np}$ (where $4$ divides $n$) and $G$ is generated by an
anticonformal automorphism. The NEC\ group $\Delta$ has either signature
(\ref{signature1}) or (\ref{signature2}).
\end{theorem}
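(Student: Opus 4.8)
The plan is to work with the NEC group $\Delta$ uniformizing $X/G$ and exploit the interplay between the $p$-gonal subgroup $H \cong C_p$ (which is normal in $G$ since $g > (p-1)^2$ forces the $p$-gonal morphism to be unique, hence $H$ is characteristic) and the quotient $G/H$. First I would record that $H \lhd G$: by the Castelnuovo--Severi-type bound, since $g > (p-1)^2$ the cyclic $p$-gonal structure is unique, so $H$ is preserved by every automorphism and is normal. The quotient $\overline{G} = G/H$ acts on the orbit sphere $X/H \cong \widehat{\mathbb{C}}$, and since $X$ is pseudo-real, $\overline{G}$ contains anticonformal elements but (one must check) inherits enough of the pseudo-real obstruction. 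The group $\overline{G}$ is a finite group of conformal and anticonformal automorphisms of the sphere; the conformal ones form a finite subgroup of $\mathrm{PSL}(2,\mathbb{C})$.

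Next I would determine the structure of $\overline{G}$. The key leverage is Proposition~\ref{signatureDelta}: because $X$ is pseudo-real, $\Delta$ has sign $-$ and no period cycles (signature $(h;-;[m_1,\dots,m_r])$), so $\Delta$ has no reflections and $\mathbb{D}/\Delta$ is a non-orientable surface without boundary. I would pass to the quotient orbifold $X/G = \mathbb{D}/\Delta$ and analyze the finite spherical group $\overline{G}$ acting on $X/H$. The classification of finite subgroups of the automorphism group of the sphere (cyclic, dihedral, and the polyhedral groups $A_4, S_4, A_5$, together with their extensions by anticonformal maps) must be narrowed down using two constraints: the genus bound $g > (p-1)^2$ and the pseudo-real condition that there is no anticonformal involution. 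The genus bound should eliminate the polyhedral cases by an area/Riemann--Hurwitz estimate, forcing $\overline{G}$ to be cyclic or dihedral-type; the pseudo-real condition then rules out the presence of certain reflections.

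After pinning down $\overline{G}$ as cyclic, I would lift this structure to $G$ as an extension $1 \to H \to G \to \overline{G} \to 1$ with $\overline{G} = C_n$ cyclic. Since $H \cong C_p$ with $p$ prime, $\mathrm{Aut}(H) \cong C_{p-1}$ is cyclic, so the conjugation action of $\overline{G}$ on $H$ is given by a homomorphism $C_n \to C_{p-1}$, which yields the relation $y^{-1}xy = x^r$ with $r^n \equiv 1 \pmod p$. The extension either splits (giving the semidirect product $C_n \ltimes_r H$ of case~1) or is non-split; when it is non-split and the action is trivial or inversion (that is $r=1$ or $r=p-1$), $G$ can be the cyclic group $C_{np}$ of case~2, which corresponds precisely to signature~\eqref{signature2}. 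Proposition~\ref{4divides} supplies that $4 \mid |G|$, and combined with the fact that the generator of $\overline{G} = C_n$ must be realized by an anticonformal automorphism (no anticonformal involution exists, so the anticonformal part has order $\geq 4$), this forces $4 \mid n$.

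Finally I would compute the signatures and the parity of $g$. Applying the Riemann--Hurwitz formula to $\Gamma \lhd \Delta$ with $[\Delta:\Gamma] = |G|$ and the signature $(h;-;[m_1,\dots,m_r])$ of $\Delta$ from Proposition~\ref{signatureDelta}, I would solve for the periods: the period $p$ appears with multiplicity equal to the number of cone points over which $H$ ramifies, and the remaining period ($n/2$ or $\frac{n}{2}p$) comes from the cyclic quotient $\overline{G}$. Matching $\mu(\Gamma) = 0$ (surface group, $\Gamma$ has signature $(g;+;[-])$) against $|G|\,\mu(\Delta)$ gives the stated counts $\frac{2(g+p-1)}{n(p-1)}$ and $\frac{2g}{n(p-1)}$ and forces these to be integers, which in turn (via the structure of $4 \mid n$ and the appearance of the glide-reflection generator $d_1$ in the long relation) implies $g$ is even. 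I expect the main obstacle to be the careful case analysis distinguishing signature~\eqref{signature1} from \eqref{signature2}: one must track exactly how the anticonformal generator interacts with the $p$-gonality automorphism to decide whether the last period is $n/2$ or $\frac{n}{2}p$, and show the latter can only occur when $r \in \{1, p-1\}$ (that is, when the image of $y^{n/2}$ acts on $H$ either trivially or by inversion, the only cases where the relevant product can be an element of order $\frac{n}{2}p$ in an abelian-like quotient).
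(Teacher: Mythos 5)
Your overall skeleton (normality of $H$ from uniqueness of the $p$-gonal morphism, the extension $1\to C_p\to G\to \overline{G}\to 1$ with $\mathrm{Aut}(C_p)$ cyclic giving $y^{-1}xy=x^r$, Riemann--Hurwitz for the period counts, and Proposition~\ref{4divides} for $4\mid n$) matches the paper, but the step on which everything hinges --- why $\overline{G}=G/H$ is \emph{cyclic} --- is justified incorrectly. You claim the genus bound $g>(p-1)^2$ eliminates the polyhedral groups ``by an area/Riemann--Hurwitz estimate.'' It does not: that bound gives uniqueness of the $p$-gonal morphism and hence $H\lhd G$, but it places no upper bound on $|\overline{G}|$ relative to the orders $12,24,60$, so no area estimate rules out $A_4$, $S_4$ or $A_5$. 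What actually kills every non-cyclic possibility is Proposition~\ref{signatureDelta}: the signature of $\Delta$ has sign $-$ and \emph{no period cycles}, so $\Delta$ contains no reflections and $X/G$ is a closed non-orientable surface; among finite groups of conformal/anticonformal automorphisms of the sphere, only those generated by a fixed-point-free anticonformal element (hence cyclic) have such a quotient --- every dihedral or polyhedral extension contains a reflection. The paper reaches the same conclusion more directly: an inequality from Riemann--Hurwitz forces the quotient genus $h=1$, and then in the canonical presentation of $\Delta$ all but one of the elliptic generators map into $\Lambda/\Gamma$, so $\Delta/\Lambda$ is generated by the image of the glide reflection $d$ alone (using $\theta(x_{l+1})=\theta(d)^{-2}$ from the long relation). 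You have not carried out either version of this argument, and the part you defer as ``the main obstacle'' --- showing that at most one period differs from $p$, which is exactly what pins the signature down to (\ref{signature1}) or (\ref{signature2}) --- is the technical core of the proof, obtained in the paper from the identity $2-n+n\sum(1-\frac{1}{s_i})+n\sum(1-\frac{1}{m_i})=0$.

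Two smaller inaccuracies: first, the dichotomy between cases 1 and 2 is not ``split versus non-split'' --- when $\gcd(p,n)=1$ the cyclic group $C_{np}\cong C_n\times C_p$ is itself a split extension; the correct criterion is whether the lift to $G$ of a generator of $\Delta/\Lambda$ has order $n$ (semidirect product presentation) or $np$ (cyclic). Second, for the restriction $r\in\{1,p-1\}$ under signature (\ref{signature2}), your heuristic about $y^{n/2}$ is not a proof; the paper's argument is that $\varpi(x_{l+1})$ then has order $\frac{n}{2}p=|\Delta^+/\Gamma|$, so $\Delta^+/\Gamma$ is cyclic, hence abelian, and since $y^2\in\Delta^+/\Gamma$ one gets $x=y^{-2}xy^{2}=x^{r^2}$, i.e.\ $r^2\equiv 1 \bmod p$. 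These gaps need to be filled before the proposal constitutes a proof.
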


\begin{proof}
Let $\Gamma$ be a surface Fuchsian group uniformizing $X$, i. e.
$X=\mathbb{D}/\Gamma$. There is an NEC\ group $\Delta$ and a Fuchsian group
$\Lambda$ such that $X/G\cong\mathbb{D}/\Delta$, $X/H\cong\mathbb{D}/\Lambda$
and
\[
\Gamma\vartriangleleft\Lambda<\Delta;\Gamma\vartriangleleft\Delta
;\Lambda/\Gamma\cong H;\Delta/\Gamma\cong G
\]
The signature of $\Lambda$ is%
\[
(0;+;[p,\overset{q}{...},p])
\]
where $q=\frac{2(g+p-1)p}{p-1}$, and by Proposition \ref{signatureDelta}, the
signature of $\Delta$ has the form
\[
(h;-;[m_{1},...,m_{r}])
\]
Assume that $n=[\Delta:\Lambda]$. If the genus of $X$ satisfies $g>(p-1)^{2}$,
then the $p$-gonal covering $X\rightarrow\widehat{\mathbb{C}}=X/H$ is unique,
\cite{A}, hence $\Lambda\vartriangleleft\Delta$ and there is $\varpi
:\Delta\rightarrow\Delta/\Gamma\cong G$ such that $\varpi^{-1}(H)=\varpi
^{-1}(\Lambda/\Gamma)=\Lambda$. Changing the indices in the canonical
generators $x_{1},...,x_{r}$ in such a way that $\varpi(x_{i}^{j}%
)\notin\Lambda/\Gamma$, for any $0<j<m_{i}$, $i=t+1,...,r$, but this condition
is not satisfied for $x_{1},...,x_{t}$, we obtain:
\[
\lbrack m_{1},...,m_{r}]=[ps_{1},...,ps_{t},m_{t+1},...,m_{r}].
\]
Given the signature of $\Lambda$, the fact that $n=[\Delta:\Lambda]$ implies
that the integers $s_{1},...,s_{t}$ divide to $n$ and $q=\frac{n}{s_{1}%
}+...+\frac{n}{s_{t}}$. Applying Riemann-Hurwitz formula we get:%
\begin{equation}
\frac{-2+q(1-\frac{1}{p})}{h-2+%
{\displaystyle\sum\nolimits_{i=1}^{t}}
(1-\frac{1}{s_{i}p})+%
{\displaystyle\sum\nolimits_{i=t+1}^{r}}
(1-\frac{1}{m_{i}})}=n \label{formula}%
\end{equation}
From the above formula and using $q=\sum\frac{n}{s_{i}}$ we obtain:%
\begin{align*}
n(h-2)  &  =-2+(%
{\displaystyle\sum\nolimits_{i=1}^{t}}
\frac{n}{s_{i}})(1-\frac{1}{p})-n%
{\displaystyle\sum\nolimits_{i=1}^{t}}
(1-\frac{1}{s_{i}p})-n%
{\displaystyle\sum\nolimits_{i=t+1}^{r}}
(1-\frac{1}{m_{i}})\\
&  <(%
{\displaystyle\sum\nolimits_{i=1}^{t}}
\frac{n}{s_{i}})(1-\frac{1}{p})-n%
{\displaystyle\sum\nolimits_{i=1}^{t}}
(1-\frac{1}{s_{i}p})=n%
{\displaystyle\sum\nolimits_{i=1}^{t}}
(\frac{1}{s_{i}}-1)\leq0
\end{align*}
Thus $h=1$ (note that $h>0$, since the sign in the signature of $\Delta$ is
$-$). Now formula (\ref{formula}) is equivalent to:%
\[
2-n+n%
{\displaystyle\sum\nolimits_{i=1}^{t}}
(1-\frac{1}{s_{i}})+n%
{\displaystyle\sum\nolimits_{i=t+1}^{r}}
(1-\frac{1}{m_{i}})=0
\]
where $s_{i}\geq1$ and $m_{i}\geq2$. From there it is easy to deduce that
there is at most one $s_{i}>1$. We have two possible cases:

i) $s_{i}=1$, for all $i$, in which case we have
\[%
{\displaystyle\sum\nolimits_{i=t+1}^{r}}
(1-\frac{1}{m_{i}})=1-\frac{2}{n},
\]
thus $r-t=1$ and $m_{t+1}=\frac{n}{2}$.

ii) There is one $i$ (we may suppose $i=t$) such that $s_{t}>1$, then
$s_{t}=\frac{n}{2}$, $s_{1}=...=s_{t-1}=1$ and $r=t$.

Therefore the possible signatures for $\Delta$ are:%
\begin{align}
&  i)\text{ }(1;-;[p,\overset{\frac{2(g+p-1)}{n(p-1)}}{...},p,\frac{n}%
{2}])\label{signatures}\\
&  ii)\text{ }(1;-;[p,\overset{\frac{2g}{n(p-1)}}{...},p,\frac{n}%
{2}p])\nonumber
\end{align}

Assume then that $\Delta$ is an NEC group with one of the signatures in
(\ref{signatures})\ and let
\[
l=\frac{2(g+p-1)}{n(p-1)}%
\]
in signature i) and%
\[
l=\frac{2g}{n(p-1)}%
\]
in signature ii).

We shall study the epimorphism%
\[
\theta:\Delta\rightarrow\Delta/\Lambda
\]
Let $d,x_{1},...,x_{l+1}$ be the generators of a canonical presentation of
$\Delta$. If $\pi:\Delta/\Gamma\rightarrow\Delta/\Lambda$ is the natural
projection, then $\pi\circ\varpi=\theta$, and since $\varpi(x_{i})\in
\Lambda/\Gamma,i=1,...,l,$ we have that $\theta(x_{1})=...=\theta(x_{l})=1$,
so the group $\Delta/\Lambda$ is generated by $\theta(d)$ and $\theta
(x_{l+1})$ ($=\theta(d)^{-2}$ by the long relation of NEC\ groups). Therefore
the group $\Delta/\Lambda$ is cyclic of order $n$ and $G\cong\Delta/\Gamma$ is
a normal extension of a cyclic group $C_{p}\cong\Lambda/\Gamma$ by a cyclic
group $C_{n}\cong\Delta/\Lambda$:%
\[
0\rightarrow C_{p}\rightarrow G\overset{\pi}{\rightarrow}C_{n}\rightarrow0
\]

The covering $X/H\cong\mathbb{D}/\Lambda\rightarrow X/G\cong\mathbb{D}/\Delta$
is a cyclic covering with automorphism group $C_{n}\cong\Delta/\Lambda
=\left\langle \theta(d)\right\rangle =\left\langle \widetilde{y}\right\rangle
$. The lift $y\in G$ of the automorphism $\widetilde{y}$ to $X=\mathbb{D}%
/\Gamma$ must have order $n$ or $np$. In the first case $G$ is a semidirect
product $C_{n}\ltimes_{r}H$ with presentation (\ref{PresenMeta}) and in the
second case $G$ is cyclic ($\cong C_{np}$).

Suppose the signature of $\Delta$ is $ii)$ of (\ref{signatures}).

Now $\varpi(x_{l+1})$ is an element of order $q=\frac{n}{2}p$ in $G$ and
$\varpi(d)$ has order either $n$ or $np$.

If the order of $\varpi(d)$ is $np$, then $G\cong C_{np}$.

In the case $\varpi(d)$ has order $n$, $G$ is generated by $x,y$ where
$\varpi(d)=y$, $\varpi(x_{l})=x$, and the following relations hold:%
\[
x^{p}=1;y^{n}=1;y^{-1}xy=x^{r}%
\]

Since there is an element of order $\frac{n}{2}p$ in $G$ then $\Delta
^{+}/\Gamma$ is cyclic of order $\frac{n}{2}p$. Since $y^{2}\in\Delta
^{+}/\Gamma$, we have $y^{-2}xy^{2}=x=x^{r^{2}}$, and then $r=1$ or $p-1$.

In all the cases, the surface $X$ being pseudo-real, we know by Proposition
\ref{4divides}, that $4$ divides $n$. Finally, either $\frac{2(g+p-1)}%
{n(p-1)}$ or $\frac{2g}{n(p-1)}$ is an integer, thus $g$ is even.
\end{proof}

\bigskip

\textbf{Remarks.} 1. Condition $g>(p-1)^{2}$ in the above theorem may be
replaced by the assumption that the cyclic group $H$ is normal in $G$. The
$n$-gonal surfaces whose full automorphism group $G$ is not the normalizer of
$H$ have been studied in \cite{BW}, \cite{W}

2. Theorem \ref{mainthm} remains valid if $G$ is a group of automorphisms (not
necessarily the full automorphism group)\ containing a $p$-gonal automorphism
and an anticonformal automorphism of order $>2$.

\bigskip

\begin{theorem}
\label{conditionssemidirectgeneral} For each prime $p>2$ and each $n$, such
that $4$ divides $n$, there exists a cyclic $p$-gonal pseudo-real Riemann
surface of genus $g$ with full automorphism group isomorphic to $C_{p}%
\rtimes_{r}C_{n}$ with presentation (\ref{PresenMeta}) and $1<r<p-1$ if and
only if $\frac{2(g+p-1)}{n(p-1)}$ is an integer $>1$.
\end{theorem}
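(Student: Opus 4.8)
The plan is to prove both implications by translating the statement into the existence of a surface-kernel epimorphism from an NEC group $\Delta$ with signature (\ref{signature1}) onto $G=C_{p}\rtimes_{r}C_{n}$, and then to treat the maximality of $G$ (that it really is the \emph{full} group) as a separate and more delicate issue. All of the NEC-group machinery used below is as in \cite{BEGG}.

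For the necessity, suppose such a surface $X$ exists. Since $1<r<p-1$, the group $G$ is neither the direct product ($r=1$) nor the exceptional case $r=p-1$, so Theorem \ref{mainthm} forces the signature of the uniformizing group $\Delta$ to be (\ref{signature1}) rather than (\ref{signature2}); in particular $l:=\frac{2(g+p-1)}{n(p-1)}$ is a positive integer equal to the number of $p$-periods. To see that $l>1$ I would read off the long relation $x_{1}\cdots x_{l}\,x_{l+1}\,d^{2}=1$ under the epimorphism $\varpi:\Delta\to G$: the generators $x_{1},\dots,x_{l}$ map into $H$, while $\varpi(x_{l+1})$ must have order $n/2$ and $\varpi(d)$ order $n$, with $\varpi(x_{l+1})=\varpi(d)^{-2}$. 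If $l=1$ the relation collapses to $\varpi(x_{1})\in\langle\varpi(d)^{2}\rangle\cap H=\{1\}$, contradicting that $\varpi(x_{1})$ has order $p$. Hence $l\geq 2$.

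For the sufficiency, assume $l=\frac{2(g+p-1)}{n(p-1)}$ is an integer $>1$. I would take $\Delta$ with signature (\ref{signature1}), canonical generators $d,x_{1},\dots,x_{l},x_{l+1}$, and define $\theta:\Delta\to G$ by $\theta(d)=y$, $\theta(x_{i})=x^{a_{i}}$ for $1\leq i\leq l$, and $\theta(x_{l+1})=y^{-2}$. Because $l\geq 2$ and $p\geq 3$, one can choose residues $a_{1},\dots,a_{l}\in\{1,\dots,p-1\}$ with $\sum_{i}a_{i}\equiv 0\pmod p$; then the long relation forces precisely $\theta(x_{l+1})=x^{-\sum a_{i}}y^{-2}=y^{-2}$, which has order $n/2$ since $4\mid n$, so $\theta$ is a well-defined homomorphism respecting every period (each $\theta(x_{i})$ has order $p$). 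Surjectivity is immediate, since $\theta(d)=y$ and $\theta(x_{1})\neq 1$ generate $G$. Mapping the orientation-preserving elliptic generators to the conformal subgroup $\langle x,y^{2}\rangle$ and the glide reflection $d$ to the anticonformal $y$ yields $\theta^{-1}(\langle x,y^{2}\rangle)=\Delta^{+}$, hence $\Gamma:=\ker\theta\subseteq\Delta^{+}$ is a surface Fuchsian group, and $X=\mathbb{D}/\Gamma$ is cyclic $p$-gonal of genus $g$ by Riemann-Hurwitz. Pseudo-reality follows by checking directly that $G$ has no anticonformal involution: an anticonformal element is of the form $x^{a}y^{b}$ with $b$ odd, and $(x^{a}y^{b})^{2}=x^{a(1+r^{-b})}y^{2b}$ equals the identity only if $n\mid 2b$, which is impossible for odd $b$ when $4\mid n$ (in accordance with Proposition \ref{4divides}).

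The hard part will be showing that $G$ is the \emph{full} automorphism group. I would argue by contradiction: if $\mathrm{Aut}(X)=G'\supsetneq G$, then since $g>(p-1)^{2}$ the $p$-gonal group $H$ is normal in $G'$, so Theorem \ref{mainthm} applies to $G'$ and furnishes an NEC group $\Delta'\supsetneq\Delta$ of signature (\ref{signature1}) or (\ref{signature2}) with $G'\cong C_{n'}\ltimes_{r'}H$ or $C_{n'p}$. The Riemann-Hurwitz relation gives $[\Delta':\Delta]=\mu(\Delta)/\mu(\Delta')=n'/n$, and because $H\lhd G'$ the conjugation action of $G'/H=C_{n'}$ on $H$ restricts to that of $G/H=C_{n}$, forcing $r\equiv (r')^{n'/n}\pmod p$. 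I expect the decisive step to be showing, by reconciling the admissible signatures (\ref{signature1}) and (\ref{signature2}) with the classification of NEC-group signature inclusions, that any proper enlargement $n'>n$ must introduce the larger period $\tfrac{n}{2}p$ of signature (\ref{signature2}) and hence requires $r\in\{1,p-1\}$, contradicting $1<r<p-1$. It is precisely this maximality analysis—matching the two possible signatures against the arithmetic constraint on $r$—where the genuine difficulty lies.
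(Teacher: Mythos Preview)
Your necessity argument for $l>1$ contains a genuine error. You assert $\varpi(x_{l+1})=\varpi(d)^{-2}$ in $G$, but the long relation only yields $\varpi(x_{l+1})=\bigl(\varpi(x_{1})\cdots\varpi(x_{l})\bigr)^{-1}\varpi(d)^{-2}$; the equality you quote holds in the quotient $G/H\cong\Delta/\Lambda$, not in $G$ itself. In fact for $l=1$ a surface-kernel epimorphism \emph{does} exist: set $\varpi(d)=y$, $\varpi(x_{1})=x$, and the long relation forces $\varpi(x_{2})=x^{-1}y^{-2}$. Because $1<r<p-1$ gives $r^{2}\not\equiv1\pmod p$, one computes
\[
(x^{-1}y^{-2})^{n/2}=x^{-(1+r^{2}+\cdots+r^{n-2})}=x^{-(r^{n}-1)/(r^{2}-1)}=1,
\]
so $\varpi(x_{2})$ has the required order $n/2$. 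The obstruction at $l=1$ is therefore \emph{not} algebraic. The paper's argument is that the signature $(1;-;[p,\tfrac{n}{2}])$ is never maximal: by \cite[Theorem~2.4.7]{BEGG} it sits with index $2$ inside an NEC group $\Omega$ of signature $(0;+;[2];\{(p,\tfrac{n}{2})\})$, and one checks that $\varpi$ extends to an epimorphism $\theta^{\ast}\colon\Omega\to C_{2}\ltimes(C_{n}\ltimes_{r}C_{p})$ whose image contains anticonformal involutions. Hence every cyclic $p$-gonal surface carrying this $G$-action with $l=1$ admits a reflection and is not pseudo-real.

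For sufficiency you build the epimorphism correctly, but your plan for proving $G=\mathrm{Aut}(X)$ is both harder than needed and not clearly sound. Applying Theorem~\ref{mainthm} to a hypothetical overgroup $G'=\mathrm{Aut}(X)$ presupposes that $X$ is pseudo-real, which is precisely what is at stake; if $G'$ happened to contain an anticonformal involution, Proposition~\ref{signatureDelta} would no longer constrain the signature of $\Delta'$, and your signature-inclusion analysis would collapse. The paper avoids all of this: for $l\geq 2$ the signature $(1;-;[p,\ldots,p,\tfrac{n}{2}])$ is finitely maximal, so one may simply take $\Delta$ to be a maximal NEC group with that signature. Then $\varpi$ admits no extension whatsoever, and $G$ is automatically the full automorphism group of $\mathbb{D}/\ker\varpi$.
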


\begin{proof}
Assume that $l=\frac{2(g+p-1)}{n(p-1)}$ is an integer $>1$. Let $\Delta$ be a
maximal NEC group with signature $(1;-;[p,\overset{l}{...},p,\frac{n}{2}])$.
We define
\[
\varpi:\Delta\rightarrow C_{n}\ltimes_{r}C_{p}=\left\langle x,y:x^{p}%
=1;y^{n}=1;y^{-1}xy=x^{r},r^{n}\equiv1\operatorname{mod}p\right\rangle
\]
by%
\begin{align*}
\varpi(x_{2i+1})  &  =x,\varpi(x_{2i+2})=x^{-1},i=0,...,(l-2)/2,\\
\varpi(x_{l+1})  &  =y^{2},\varpi(d)=y^{-1}\text{, if }l\text{ is even}%
\end{align*}
and%
\begin{align*}
\varpi(x_{2i+1})  &  =x,\varpi(x_{2i+2})=x^{-1},i=0,...,(l-3)/2,\\
\varpi(x_{l-2})  &  =x,\varpi(x_{l-1})=x,\varpi(x_{l})=x^{-2}\\
\varpi(x_{l+1})  &  =y^{2},\varpi(d)=y^{-1}\text{, if }l\text{ is odd, }l>1
\end{align*}
Every element of $C_{n}\ltimes_{r}C_{p}$ can be expressed as $x^{\alpha
}y^{\beta}$, by using the relation $y^{-1}xy=x^{r}$. Note that $\varpi
(\Delta^{+})=\varpi\left\langle x_{1},...,x_{l},x_{l+1},d^{2}\right\rangle
=\left\langle x,y^{2}\right\rangle $, $4$ divides $n$ and $n/2$ is an even
number. Thus the elements $x^{\alpha}y^{n/2}$ of order $2$ of the group
$C_{n}\ltimes_{r}C_{p}$, that can be written as $x^{\alpha}y^{n/2}$,
correspond to orientation preserving automorphisms. The epimorphism $\varpi$
cannot be extended since $\Delta$ is maximal and $\mathbb{D}/\ker\varpi$ is
the pseudo-real Riemann surface that we are looking for.

If $l=1$ by Theorem 2.4.7\ of \cite{BEGG} the group $\Delta$ is contained in
an NEC\ group $\Omega$ with signature $(0;+;[2];\{(p,\frac{n}{2})\}$. Let
\begin{gather*}
C_{2}\ltimes(C_{n}\ltimes_{r}C_{p})=\\
\left\langle s,x,y:s^{2}=1,x^{p}=1;y^{n}=1;y^{-1}xy=x^{r};sxs=x^{-1}%
,sys=y^{-1}\right\rangle
\end{gather*}
We may assume that $\varpi:\Delta\rightarrow C_{n}\ltimes_{r}C_{p}$ is given
by $\varpi(d)=y,\varpi(x_{1})=x,\varpi(x_{2})=y^{-2}$, where $d,x_{1}$ and
$x_{2}$ are generators of a canonical presentation of $\Delta$. The
epimorphism $\varpi$ is the restriction to $\Delta$ of
\begin{align*}
\theta^{\ast}  &  :\Omega\rightarrow C_{2}\ltimes(C_{n}\ltimes C_{p})\\
\theta^{\ast}(x_{1}^{\prime})  &  =sx^{\alpha}y^{-1}\text{, }\theta^{\ast
}(c_{0}^{\prime})=s\text{, }\theta^{\ast}(c_{1}^{\prime})=sx,\text{ }%
\theta^{\ast}(c_{2}^{\prime})=sxy^{-2},\text{ }\theta^{\ast}(e)=sx^{\alpha
}y^{-1}%
\end{align*}
where $\alpha\equiv\frac{p+1}{2}r^{n-1}\operatorname{mod}p$. Hence there are
anticonformal involutions in $\mathrm{Aut}(\mathbb{D}/\ker\varpi)$ and
$\mathbb{D}/\ker\varpi$ is not a pseudo-real Riemann surface.
\end{proof}

\bigskip

\begin{theorem}
\label{conditionsrealizability} For each prime $p>2$ and each $n$, such that
$4$ divides $n$, there exists a cyclic $p$-gonal pseudo-real Riemann surface
of genus $g$, with full automorphism group isomorphic to $C_{np}$ if and only
if either $\frac{2(g+p-1)}{n(p-1)}$ or $\frac{2g}{n(p-1)}$ are integers $>1$
and $\gcd(p,n/2)=1$.
\end{theorem}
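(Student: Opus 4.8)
The plan is to prove both directions of the equivalence for the cyclic case $G \cong C_{np}$, mirroring the structure already established in Theorem \ref{mainthm} and in the proof of Theorem \ref{conditionssemidirectgeneral}. By Theorem \ref{mainthm}, when $G \cong C_{np}$ the NEC group $\Delta$ must have one of the two signatures \eqref{signature1} or \eqref{signature2}, with the relevant quantity $\frac{2(g+p-1)}{n(p-1)}$ or $\frac{2g}{n(p-1)}$ being an integer. So the necessity of the integrality condition is essentially inherited; the new content is the coprimality condition $\gcd(p,n/2)=1$ and the construction of an explicit surface in the sufficiency direction.

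\textbf{Necessity.} First I would assume such a pseudo-real surface $X$ exists with $\mathrm{Aut}(X) \cong C_{np}$. By Theorem \ref{mainthm}, case 2, the group $\Delta$ uniformizing $X/G$ has signature \eqref{signature1} or \eqref{signature2}, which forces the respective expression to be an integer; that it is $>1$ follows from the genus bound $g>(p-1)^2$ as in the earlier arguments. For the coprimality, I would use the structure of the extension $0 \to C_p \to G \to C_n \to 0$. Since $G \cong C_{np}$ is cyclic, it contains a unique subgroup of each order dividing $np$; in particular the image of an anticonformal generator $\varpi(d)$ has order $n$ or $np$. By Proposition \ref{4divides}, $4 \mid n$, so $G$ has an element of order $4$; the interplay between the $p$-part and the $2$-part of $C_{np}$, together with the requirement that the generator $\varpi(x_{l+1})$ (of order $\frac{n}{2}p$ in signature \eqref{signature2}) be consistent with a cyclic group of order $np$, forces $\gcd(p, n/2)=1$: otherwise the element of order $\frac{n}{2}p$ and the structure of $C_{np}$ would be incompatible, or the surface would fail to be pseudo-real because an anticonformal involution would appear.

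\textbf{Sufficiency.} Conversely, assuming the integrality and $\gcd(p,n/2)=1$, I would construct the surface by exhibiting a surface-kernel epimorphism $\varpi : \Delta \to C_{np}$ from a maximal NEC group $\Delta$ with the appropriate signature, exactly in the style of the construction in Theorem \ref{conditionssemidirectgeneral}. Writing $C_{np} = \langle z : z^{np}=1 \rangle$ and decomposing via the coprimality as $C_{np} \cong C_n \times C_p$, I would send the elliptic generators $x_i$ of period $p$ to generators of the $C_p$ factor (in cancelling pairs $x, x^{-1}$ as before, adjusting for parity of $l$), the last elliptic generator to the appropriate power of $z$ of order $\frac{n}{2}$ or $\frac{n}{2}p$, and the glide reflection $d$ to an element whose square lies in $\Delta^+$. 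I would check that $\varpi$ is surjective, that $\varpi$ preserves periods (so $\ker\varpi$ is a surface group), and that no element of order $2$ maps outside $\varpi(\Delta^+)$ — i.e. every involution of $C_{np}$ is orientation-preserving — which is what guarantees the surface is pseudo-real rather than real. The maximality of $\Delta$ ensures $\mathrm{Aut}(\mathbb{D}/\ker\varpi)$ is exactly $C_{np}$.

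The main obstacle I anticipate is the coprimality condition in the necessity direction and the corresponding verification of pseudo-reality in the sufficiency direction. In Theorem \ref{conditionssemidirectgeneral} the analogous surface turned out to be non-pseudo-real precisely in the degenerate case $l=1$; here the delicate point is ensuring that in the cyclic group $C_{np}$ no anticonformal involution arises, which is exactly controlled by whether the unique involution of $C_{np}$ lies in the orientation-preserving subgroup $\Delta^+/\Gamma$. Since $4 \mid n$, the $2$-part of $C_{np}$ has order $\geq 4$, and the unique involution is a square, hence orientation-preserving; but one must rule out $l=1$ degeneracies and confirm that $\gcd(p,n/2)=1$ is genuinely forced (and not merely sufficient). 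Tracking the orders $\frac{n}{2}$ versus $\frac{n}{2}p$ in the two signatures, and matching them against the cyclic structure, is where the argument requires care.
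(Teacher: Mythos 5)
Your overall architecture (necessity from Theorem \ref{mainthm} plus the two extra conditions, sufficiency from a surface-kernel epimorphism of a maximal NEC group) matches the paper's, but the two conditions that are specific to this theorem --- $\gcd(p,n/2)=1$ and the strict inequality $>1$ --- are exactly the ones your argument does not establish. For the coprimality, neither of your proposed reasons works: the claim that otherwise ``the element of order $\frac{n}{2}p$ and the structure of $C_{np}$ would be incompatible'' is false, since $C_{np}$ contains elements of every order dividing $np$ and $\frac{n}{2}p$ divides $np$; and the claim that an anticonformal involution would appear is unsubstantiated, because the unique involution of $C_{np}=\langle t\rangle$ is $t^{np/2}$, which is a square (as $4\mid n$) and hence conformal regardless of $\gcd(p,n/2)$. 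The paper obtains this condition by citing condition ii) of Theorem 4 of \cite{Et}. If you want it self-contained, the real obstruction is surjectivity: with $\varpi(d)=t^{c}$, $c$ odd, and $\varpi(x_{i})\in\langle t^{n}\rangle$ for $i\leq l$, the requirement that $\varpi(x_{l+1})$ have exactly the order prescribed by the signature, combined with the long relation $x_{1}\cdots x_{l+1}d^{2}=1$, forces (when $p\mid n$, equivalently $p\mid n/2$) every canonical generator into the proper subgroup $\langle t^{p}\rangle$. That computation, which must be done for each of the two signatures of (\ref{signatures}), is precisely what your proposal omits.

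Second, your assertion that the integrality being $>1$ ``follows from the genus bound $g>(p-1)^{2}$'' is wrong: $l=1$ is perfectly compatible with arbitrarily large $g$ (take $n=\frac{2(g+p-1)}{p-1}$). The paper excludes $l=1$ by a different mechanism: the signature $(1;-;[p,\frac{n}{2}])$ (resp.\ $(1;-;[p,\frac{n}{2}p])$) is non-maximal, so by Theorem 2.4.7 of \cite{BEGG} the group $\Delta$ lies in an NEC group $\Omega$ with signature $(0;+;[2];\{(p,\frac{n}{2}p)\})$, and the epimorphism onto $C_{np}$ extends to $\theta^{\ast}:\Omega\rightarrow D_{np}$; the reflections of $\Omega$ then give anticonformal involutions, so the surface is real rather than pseudo-real. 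You do flag the ``$l=1$ degeneracy'' at the end, but you neither carry out the extension nor identify the dihedral overgroup, and your earlier attribution to the genus bound would lead the proof astray. The sufficiency half of your plan (explicit epimorphism via $C_{np}\cong C_{n}\times C_{p}$, maximality of $\Delta$, and pseudo-reality from the fact that $4\mid n$ makes the unique involution orientation-preserving) is sound and is essentially what the paper delegates to Theorem 4 of \cite{Et}.
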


\begin{proof}
If either $\frac{2(g+p-1)}{n(p-1)}$ or $\frac{2g}{n(p-1)}$ are integers $>1$
with $\gcd(p,n/2)=1$, we may consider maximal NEC\ groups with signatures
either
\[
(1;-;[p,\overset{\frac{2(g+p-1)}{n(p-1)}}{...},p,\frac{n}{2}])\text{ or
}(1;-;[p,\overset{\frac{2g}{n(p-1)}}{...},p,\frac{n}{2}p])
\]
respectively. The surface Fuchsian groups uniformizing the cyclic $p$-gonal
pseudo-real Riemann surfaces having cyclic automorphism group are the kernel
of the epimorphisms given by Theorem 4 of \cite{Et}. 

Assume that there is a cyclic $p$-gonal pseudo-real Riemann surface $X$ of
genus $g$, with full automorphism group $C_{np}$. By Theorem 4, there is a
maximal NEC\ group $\Delta$ with signature as given in (\ref{signatures})
(then either $\frac{2(g+p-1)}{n(p-1)}$ or $\frac{2g}{n(p-1)}$ are integers)
and there is an epimorphism $\varpi:\Delta\rightarrow C_{np}$ such that
$\ker\varpi$ uniformizes $X$. Condition ii) of Theorem 4 of \cite{Et} implies
$\gcd(p,n/2)=1$.

For the first possible signature of $\Delta$, assume $\frac{2(g+p-1)}%
{n(p-1)}=1$. The epimorphism $\varpi:\Delta\rightarrow C_{np}=\left\langle
t\right\rangle $, where $\Delta$ has signature $(1;-;[p,\frac{n}{2}])$ is
given by $\varpi(d)=t,\varpi(x_{1})=t^{2m},\varpi(x_{2})=t^{-2m-2}$, where
$d,x_{1}$ and $x_{2}$ are generators of a canonical presentation of $\Delta$,
and $0<m<p$ (note that $x_{1}$ is an orientation preserving transformation and
then $\varpi(x_{1})$ must be an even power of $t$). By Theorem 2.4.7\ of
\cite{BEGG} the group $\Delta$ is contained in an NEC\ group $\Omega$ with
signature $(0;+;[2];\{(p,\frac{n}{2}p)\})$ and the epimorphism $\varpi$ is the
restriction to $\Delta$ of
\begin{align*}
\theta^{\ast}  &  :\Omega\rightarrow D_{np}=\left\langle s,t:s^{2}%
=t^{np}=1,sts=t^{-1}\right\rangle \\
\theta^{\ast}(x_{1}^{\prime})  &  =t^{2m+1}\text{, }\theta^{\ast}%
(c_{0}^{\prime})=t^{2m}s\text{, }\theta^{\ast}(c_{1}^{\prime})=s,\text{
}\theta^{\ast}(c_{2}^{\prime})=st^{-2m-2}%
\end{align*}
where $x_{1}^{\prime},c_{0}^{\prime},c_{1}^{\prime},c_{2}^{\prime}$ is a set
of generators of a canonical presentation of $\Omega$. Then $\ker\theta^{\ast
}=\ker\varpi=\Gamma$ and $\mathrm{Aut}^{\pm}(\mathbb{D}/\Gamma)$ contains
$D_{np}$ with anticonformal involutions. Hence $\mathbb{D}/\Gamma$ is not
pseudo-real. Thus $\frac{2(g+p-1)}{n(p-1)}>1$.

The argument for the second possible signature of $\Delta$ with $\frac
{2g}{n(p-1)}=1$ is similar.
\end{proof}

\begin{theorem}
For each prime $p>2$ and each $n$, such that $4$ divides $n$, there exists a
cyclic $p$-gonal pseudo-real Riemann surface of genus $g$ with full
automorphism group isomorphic to $C_{p}\rtimes_{r}C_{n}$ with presentation
(\ref{PresenMeta}) and $r=1$ or $p-1$ if and only if either $\frac
{2(g+p-1)}{n(p-1)}$ is an integer $>1$ or $\frac{2g}{n(p-1)}$ is an integer
$>1$ and $\gcd(p,n/2)=1$.
\end{theorem}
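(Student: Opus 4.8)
The plan is to split the statement into its two directions and, on each side, to separate the two signatures allowed by Theorem \ref{mainthm}. For the necessity direction I would start from a cyclic $p$-gonal pseudo-real surface $X$ whose full automorphism group is $G\cong C_{p}\rtimes_{r}C_{n}$ with $r=1$ or $p-1$ and $4\mid n$, and let $\Delta$ uniformize $X/G$. By Theorem \ref{mainthm} the signature of $\Delta$ is either (\ref{signature1}) or (\ref{signature2}), so $\frac{2(g+p-1)}{n(p-1)}$ or $\frac{2g}{n(p-1)}$ is a positive integer $l$. In the first case I would rule out $l=1$ exactly as in the $l=1$ paragraph of the proof of Theorem \ref{conditionssemidirectgeneral}: when $l=1$ the group $\Delta$ with signature $(1;-;[p,\frac{n}{2}])$ sits inside an NEC group $\Omega$ with signature $(0;+;[2];\{(p,\frac{n}{2})\})$ and the epimorphism extends to $C_{2}\ltimes(C_{n}\ltimes_{r}C_{p})$, producing an anticonformal involution and contradicting pseudo-reality. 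In the second case the last period $\frac{n}{2}p$ forces $G$ to contain an element of order $\frac{n}{2}p$; a short order computation in $C_{p}\rtimes_{r}C_{n}$ shows that such an element must lie in the abelian subgroup $\langle x,y^{2}\rangle$ and hence, using $r=1$ or $p-1$, forces $\gcd(p,n/2)=1$, while the $l=1$ subcase is excluded as in the proof of Theorem \ref{conditionsrealizability}. This yields the stated disjunction of conditions.

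For the sufficiency direction I would treat the two signatures by explicit monodromy epimorphisms, choosing $\Delta$ maximal in each case. If $\frac{2(g+p-1)}{n(p-1)}=l>1$, take $\Delta$ maximal with signature (\ref{signature1}) and use verbatim the epimorphism $\varpi$ defined in the proof of Theorem \ref{conditionssemidirectgeneral}, whose defining formulas make sense for every admissible $r$, in particular for $r=1$ and $r=p-1$. The only point needing attention is pseudo-reality: since $4\mid n$ one has $\varpi(\Delta^{+})=\langle x,y^{2}\rangle$, and because $p$ is odd every involution of $C_{p}\rtimes_{r}C_{n}$ has the form $x^{\alpha}y^{n/2}$ with $y^{n/2}=(y^{2})^{n/4}\in\langle x,y^{2}\rangle$, hence is orientation preserving; maximality of $\Delta$ then gives $\mathrm{Aut}(\mathbb{D}/\ker\varpi)=C_{p}\rtimes_{r}C_{n}$ with no anticonformal involution.

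If instead $\frac{2g}{n(p-1)}=l>1$ and $\gcd(p,n/2)=1$, take $\Delta$ maximal with signature (\ref{signature2}). Here I would build $\varpi$ so that $\varpi(d)=y^{-1}$, the first $l$ elliptic generators are sent to powers of $x$ arranged to multiply to $x^{-a}$, and the last generator is sent to $\varpi(x_{l+1})=x^{a}y^{2}$ with $a\not\equiv0\operatorname{mod}p$. Because $r=1$ or $p-1$ the elements $x$ and $y^{2}$ commute and $\gcd(p,n/2)=1$, so $x^{a}y^{2}$ has order exactly $\frac{n}{2}p$, matching the last period, while the long relation $\varpi(x_{1})\cdots\varpi(x_{l+1})\varpi(d)^{2}=1$ reduces to $\varpi(x_{1})\cdots\varpi(x_{l+1})=y^{2}$, as arranged. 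Surjectivity is clear from $\langle x,y\rangle=G$, and the kernel is a surface group since all orders are preserved. Pseudo-reality is checked as for signature (\ref{signature1}): $\varpi(\Delta^{+})=\langle x,y^{2}\rangle$ contains every involution $x^{\alpha}y^{n/2}$, so no anticonformal involution occurs, and maximality of $\Delta$ identifies $G$ with the full automorphism group.

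The main obstacle I anticipate is the signature (\ref{signature2}) construction: one must simultaneously respect the long relation, hit an element of order precisely $\frac{n}{2}p$ on the last elliptic generator (which is where $\gcd(p,n/2)=1$ and the restriction $r\in\{1,p-1\}$ are genuinely used, since for other $r$ the subgroup $\langle x,y^{b}\rangle$ with $y^{b}$ centralizing $x$ forces $r^{b}\equiv1$ and hence $r\equiv\pm1\operatorname{mod}p$, so no element of that order exists), and keep $\varpi(\Delta^{+})$ equal to $\langle x,y^{2}\rangle$ so that every involution stays conformal. A secondary technical point is confirming that signature (\ref{signature2}) with $l>1$ is indeed maximal, so that the constructed group is the full automorphism group; for $l=1$ maximality fails and the surface ceases to be pseudo-real, which is precisely what the necessity argument exploits.
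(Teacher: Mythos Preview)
Your plan is essentially the paper's own proof: same split into the two signatures, same reuse of the epimorphism from Theorem~\ref{conditionssemidirectgeneral} for signature~(\ref{signature1}), same construction $\varpi(d)=y^{-1}$, $\varpi(x_{l+1})=x^{a}y^{2}$ with the first $l$ generators in $\langle x\rangle$ for signature~(\ref{signature2}) (the paper just pins down $a\in\{1,2\}$ according to the parity of $l$), and the same exclusion of $l=1$ by embedding $\Delta$ in a larger NEC group $\Omega$ and extending $\varpi$ to a group containing an anticonformal involution. Two small remarks: your derivation of the necessity of $\gcd(p,n/2)=1$ from the existence of an element of order $\tfrac{n}{2}p$ in $\langle x,y^{2}\rangle$ is more explicit than what the paper writes; and for the $l=1$ subcase of signature~(\ref{signature2}) the paper carries out the extension directly here (to $C_{2}\ltimes(C_{n}\ltimes_{r}C_{p})$ via $\Omega$ of signature $(0;+;[2];\{(p,\tfrac{n}{2}p)\})$) rather than quoting Theorem~\ref{conditionsrealizability}, whose target group is $D_{np}$---so your cross-reference should point to the analogous method, not the literal argument.
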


\begin{proof}
If either $\frac{2(g+p-1)}{n(p-1)}$ or $\frac{2g}{n(p-1)}$ are integers $>1$,
we may consider maximal NEC\ groups with signatures either
\[
(1;-;[p,\overset{\frac{2(g+p-1)}{n(p-1)}}{...},p,\frac{n}{2}])\text{ or
}(1;-;[p,\overset{\frac{2g}{n(p-1)}}{...},p,\frac{n}{2}p])
\]
respectively. For the first case considering the epimorphism defined in
Theorem \ref{conditionssemidirectgeneral} we obtain the surface that we are
looking for. If the signature is $(1;-;[p,\overset{l}{...},p,\frac{n}{2}p])$,
with $l=\frac{2g}{n(p-1)}$, and $\gcd(p,n/2)=1$, then we define:%
\[
\varpi:\Delta\rightarrow C_{n}\ltimes_{r}C_{p}=\left\langle x,y:x^{p}%
=1;y^{n}=1;y^{-1}xy=x^{r}\right\rangle
\]
by%
\begin{align*}
\varpi(x_{2i+1})  &  =x,\varpi(x_{2i+2})=x^{-1},i=0,...,(l-3)/2,\\
\varpi(x_{l})  &  =x,\varpi(x_{l+1})=x^{-1}y^{2},\varpi(d)=y^{-1}\text{, if
}l\text{ is odd}%
\end{align*}
and%
\begin{align*}
\varpi(x_{2i+1})  &  =x,\varpi(x_{2i+2})=x^{-1},i=0,...,(l-4)/2,\\
\varpi(x_{l-1})  &  =x,\varpi(x_{l})=x\\
\varpi(x_{l+1})  &  =x^{-2}y^{2},\varpi(d)=y^{-1}\text{, if }l\text{ is even}%
\end{align*}
Note that $x^{-1}y^{2}$ and $x^{-2}y^{2}$ have order $\frac{n}{2}p$ as
consequence of the condition $\gcd(p,n/2)=1$.

For the first possible signature of $\Delta$, if we assume $\frac
{2(g+p-1)}{n(p-1)}=1$, the argument in Theorem
\ref{conditionssemidirectgeneral} shows that the surfaces in these conditions
are not pseudoreal. Assume now that $\Delta$ has the second possible signature
with $l=1$. We may assume that the epimorphism $\varpi:\Delta\rightarrow
C_{p}\rtimes_{r}C_{n}$, where $\Delta$ has signature $(1;-;[p,\frac{n}{2}p])$
is given by $\varpi(d)=y,\varpi(x_{1})=x,\varpi(x_{2})=x^{-1}y^{-2}$, where
$d,x_{1}$ and $x_{2}$ are generators of a canonical presentation of $\Delta$.
By Theorem 2.4.7\ of \cite{BEGG} the group $\Delta$ is contained in an
NEC\ group $\Omega$ with signature $(0;+;[2];\{(p,\frac{n}{2}p)\})$. If $r=1$,
the epimorphism $\varpi$ is the restriction to $\Delta$ of%
\begin{align*}
\theta^{\ast}  &  :\Omega\rightarrow C_{2}\ltimes(C_{n}\ltimes C_{p})\\
\theta^{\ast}(x_{1}^{\prime})  &  =sy^{-1}\text{, }\theta^{\ast}(c_{0}%
^{\prime})=s\text{, }\theta^{\ast}(c_{1}^{\prime})=sx,\text{ }\theta^{\ast
}(c_{2}^{\prime})=sy^{-2},\theta^{\ast}(e^{\prime})=sy^{-1}%
\end{align*}
where%
\begin{gather*}
C_{2}\ltimes(C_{n}\ltimes_{r}C_{p})=\\
\left\langle s,x,y:s^{2}=1,x^{p}=1;y^{n}=1;y^{-1}xy=x^{r};sxs=x^{-1}%
,sys=y^{-1}\right\rangle ,\\
r=1,p-1
\end{gather*}

Hence the surface uniformized by $\ker\varpi$ has anticonformal involutions.
\end{proof}

\section{Cyclic $p$-gonal pseudo-real Riemann surfaces with automorphism group
of maximal order for a fixed genus.}

For each type of automorphism groups of pseudo-real $p$-gonal Riemann surfaces
of genus $g$, next proposition determines its maximal order:

\begin{proposition}
Let $p$ be a prime $>2$ and $g>(p-1)^{2}$. Assume that $p-1$ divides $g$.

1. If $\frac{g}{p-1}\equiv3\operatorname{mod}4$ there are pseudo-real cyclic
$p$-gonal Riemann surfaces of genus $g$ with full automorphism group of order
$\frac{p(g+p-1)}{p-1}$ and this order is maximal for pseudo-real cyclic
$p$-gonal Riemann surfaces of genus $g$. The full automorphism group is either
isomorphic to a semidirect product of cyclic groups or, if $\gcd
(p,\frac{g+p-1}{2(p-1)})=1$, may be isomorphic to a cyclic group.

2. If $\frac{g}{p-1}\equiv0\operatorname{mod}4$ and $\gcd(p,\frac
{g+p-1}{2(p-1)})=1$ there are pseudo-real cyclic $p$-gonal Riemann surfaces of
genus $g$ with full automorphism group of order $\frac{pg}{(p-1)}$ and this
order is maximal for pseudo-real cyclic $p$-gonal Riemann surfaces of genus
$g$. In this case the full automorphism group is either cyclic or
$C_{p}\rtimes_{r}C_{n}$ with $r=1$ or $p-1.$
\end{proposition}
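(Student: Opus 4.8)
The plan is to turn the maximality question into one about maximizing $n$, since by Theorem \ref{mainthm} every full automorphism group has order $np$ with $4\mid n$, and the admissible values of $n$ are exactly those for which $\Delta$ carries signature (\ref{signature1}) or (\ref{signature2}). For signature (\ref{signature1}) the number $l_1=\frac{2(g+p-1)}{n(p-1)}$ of periods equal to $p$ must be a positive integer, and the existence results of Section~3 (Theorems \ref{conditionssemidirectgeneral} and \ref{conditionsrealizability}, together with the last theorem of that section) show that a pseudo-real surface forces $l_1>1$; hence $l_1\ge 2$ and, using $p-1\mid g$, we get the ceiling $n\le \frac{g+p-1}{p-1}$. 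Signature (\ref{signature2}) similarly requires $l_2=\frac{2g}{n(p-1)}$ to be an integer $>1$, giving $n\le\frac{g}{p-1}$. Since the first ceiling exceeds the second, the whole proposition reduces to deciding, under each congruence hypothesis on $\frac{g}{p-1}$, the largest multiple of $4$ realizable as $n$ and which signature realizes it.

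For part~1 I would take $l_1=2$ and $n=\frac{g+p-1}{p-1}=\frac{g}{p-1}+1$. When $\frac{g}{p-1}\equiv 3\pmod 4$ this $n$ is divisible by $4$, so it is admissible; it attains the ceiling above and dominates the bound from signature (\ref{signature2}), so the order $\frac{p(g+p-1)}{p-1}$ is maximal. Realizability is then immediate from Section~3: the semidirect product is produced by Theorem \ref{conditionssemidirectgeneral} (for $1<r<p-1$) and by the last theorem (for $r=1,p-1$), both needing only $l_1>1$ on signature (\ref{signature1}); and $C_{np}$ is produced by Theorem \ref{conditionsrealizability} precisely when $\gcd(p,n/2)=1$, which is the stated proviso $\gcd(p,\frac{g+p-1}{2(p-1)})=1$.

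For part~2 the crucial observation is that signature (\ref{signature1}) is unavailable. Writing $\frac{g}{p-1}=4k$, one has $\frac{2(g+p-1)}{p-1}=2(4k+1)$, an integer of $2$-adic valuation exactly $1$; any admissible $n$ for signature (\ref{signature1}) divides this number, so it cannot be a multiple of $4$, contradicting $4\mid n$. Thus only signature (\ref{signature2}) survives, its largest admissible value is $n=\frac{g}{p-1}=4k$ (taking $l_2=2$), this is divisible by $4$, and $n\le\frac{g}{p-1}$ makes the resulting order $\frac{pg}{p-1}$ maximal. The list of possible isomorphism types is then forced by matching this signature to the existence theorems: both $C_{np}$ (Theorem \ref{conditionsrealizability}) and the semidirect product with $r=1,p-1$ (the last theorem) are realized through signature (\ref{signature2}) under the gcd hypothesis of part~2, whereas the semidirect product with $1<r<p-1$ is produced only by Theorem \ref{conditionssemidirectgeneral} through signature (\ref{signature1}) and therefore cannot occur here.

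The step I expect to be the main obstacle is this part-2 unavailability argument: one must combine the hard constraint $4\mid n$ (Proposition \ref{4divides}) with the fixed $2$-adic valuation of $\frac{2(g+p-1)}{p-1}$ to eliminate signature (\ref{signature1}) entirely, and then track which of the Section~3 theorems applies to the surviving signature so that the admissible isomorphism types---and the precise gcd proviso---come out exactly as stated.
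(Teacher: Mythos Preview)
Your proposal is correct and follows essentially the same approach as the paper: bound $n$ via $l\ge 2$ for each of the two signatures in (\ref{signatures}), realize the ceiling $n=\frac{g}{p-1}+1$ for signature~i) in part~1, and in part~2 eliminate signature~i) by the $2$-adic valuation of $\frac{2(g+p-1)}{p-1}$ before maximizing on signature~ii). Your write-up is in fact more explicit than the paper's own proof in tracking which of the Section~3 existence theorems supplies each isomorphism type and in comparing the two signature bounds for global maximality.
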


\begin{proof}
Assume that $X$ is a pseudo-real cyclic $p$-gonal Riemann surface of genus $g$
with $X/\mathrm{Aut}(X)$ uniformized by an NEC group where signature $i)$ of
(\ref{signatures}) in theorem \ref{mainthm} and then
\[
2\leq\frac{2(g+p-1)}{n(p-1)}%
\]
or equivalently:
\[
n\leq\frac{g}{(p-1)}+1
\]

When $p-1$ divides $g$ we may have $n=\frac{g}{(p-1)}+1$. Hence there are
surfaces with automorphism groups of order $np=\frac{p(g+p-1)}{p-1}$ and this
order is maximal. Note that $\frac{g}{p-1}\equiv3\operatorname{mod}4$, since
$4$ must divide $n$. By theorem \ref{mainthm} the automorphism group is either
isomorphic to $C_{p}\rtimes_{r}C_{n}$ or, if $\gcd(p,\frac{g+p-1}{2(p-1)})=1$,
may be isomorphic to $C_{np}$.

Now assume that $X$ is a pseudo-real cyclic $p$-gonal Riemann surface of genus
$g$ with $X/\mathrm{Aut}(X)$ uniformized by an NEC group with signature $ii)$
of (\ref{signatures}).

Since $4$ must divide $n$, if $\frac{g}{p-1}\equiv0\operatorname{mod}4$ and
$\gcd(p,\frac{g}{2(p-1)})=1$, there are surfaces with full automorphism group
isomorphic to a maximal order group (the order is $\frac{pg}{2(p-1)}$). Note
that when $\frac{g}{p-1}\equiv0\operatorname{mod}4$, $4$ does not divide
$\frac{2(g+p-1)}{p-1}$ and then case 1 of theorem \ref{mainthm} is not
possible. By theorem \ref{mainthm} case 2, the automorphism group is
isomorphic to $C_{np}$ or $C_{p}\rtimes_{r}C_{n}$ with $r=1,p-1$.
\end{proof}

\section{Cyclic $p$-gonal pseudo-real Riemann surfaces in the moduli space.}

In this section we apply the previous results to the study of branch locus of
moduli spaces.

First we define:

$\mathcal{M}_{g}^{PR}(n,p)=\{X:X$ is a cyclic $p$-gonal pseudo-real Riemann
surface of genus $g$ and with full automorphism group of order $np\}.$

Let $\mathcal{M}_{g}$ be the moduli space of Riemann surfaces of genus $g$,
$\mathbb{T}_{g}$ be the corresponding Teichm\"{u}ller space and
\[
\pi:\mathbb{T}_{g}\rightarrow\mathbb{T}_{g}/\mathrm{Mod}_{g}=\mathcal{M}_{g}%
\]
be the natural projection.

\begin{proposition}
Let $p$ be a prime $>2$ and $g\geq2$. Assume that $p-1$ divides $g$.

1. If $\frac{g}{p-1}\equiv3\operatorname{mod}4$ then the set $\mathcal{M}%
^{PR}(\frac{g+p-1}{p-1},p)$ is a $2$-manifold in the branch locus of the
orbifold $\mathcal{M}_{g}$.

2. If $\frac{g}{p-1}\equiv0\operatorname{mod}4$ and $\gcd(p,\frac
{g+p-1}{2(p-1)})=1$ then the set $\mathcal{M}^{PR}(\frac{g}{p-1},p)$ is a
$2$-manifold in the branch locus of the orbifold $\mathcal{M}_{g}$.
\end{proposition}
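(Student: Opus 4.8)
The plan is to exhibit $\mathcal{M}^{PR}(n,p)$ (with $n=\frac{g+p-1}{p-1}$ in case 1 and $n=\frac{g}{p-1}$ in case 2) as an equisymmetric stratum in $\mathcal{M}_g$ and to read off its dimension from the Teichm\"uller space of the uniformizing NEC group. By Theorem \ref{mainthm} together with the preceding maximality Proposition, in each of these two maximal cases the quotient $X/\mathrm{Aut}(X)$ is uniformized by a maximal NEC group $\Delta$ whose signature has exactly three proper periods, namely $(1;-;[p,p,\frac{n}{2}])$ in case 1 and $(1;-;[p,p,\frac{n}{2}p])$ in case 2. First I would check that $\mathcal{M}^{PR}(n,p)$ is precisely the image in $\mathcal{M}_g$ of the Teichm\"uller space $T(\Delta)$ of this group. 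Every surface carrying such a $\Delta$-action is cyclic $p$-gonal and pseudo-real: the group $G=\Delta/\Gamma$ contains no anticonformal involution, since an anticonformal element is an odd power of the order-$n$ generator $y$ times an element of $H$, and squaring it forces $y^{2k}=1$ with $k$ odd, which is impossible because $4\mid n$ (by Proposition \ref{4divides}). Since $np$ is the maximal admissible order, the full group cannot be strictly larger than $G$ on a dense open subset, so there $G$ is exactly the full automorphism group.

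For the dimension I would pass to the canonical Fuchsian subgroup. By Proposition \ref{signatureDelta}, $\Delta^{+}$ has signature $(0;+;[p,p,p,p,m,m])$ with $m=\frac{n}{2}$ or $m=\frac{n}{2}p$, a genus-zero Fuchsian group whose Teichm\"uller space $T(\Delta^{+})$ is a complex manifold of dimension given by the standard Fuchsian formula for a sphere with six cone points. An anticonformal element of $\Delta\setminus\Delta^{+}$ acts on $T(\Delta^{+})$ as an anti-holomorphic involution (its square lies in $\Delta^{+}$, hence acts trivially on $T(\Delta^{+})$), and $T(\Delta)$ is exactly its fixed-point locus; being a real form of a complex manifold, its real dimension equals $\dim_{\mathbb{C}}T(\Delta^{+})$, which yields the dimension asserted for $\mathcal{M}^{PR}(n,p)$. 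Non-emptiness of this fixed locus, hence the actual attainment of that dimension, is guaranteed by the existence results of Section 3 (Theorems \ref{conditionssemidirectgeneral} and \ref{conditionsrealizability}).

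Membership in the branch locus is then immediate: since $4\mid n$ we have $|G^{+}|=\frac{np}{2}>1$, so each such $X$ carries non-trivial conformal automorphisms and represents a singular point of $\mathcal{M}_g=\mathbb{T}_g/\mathrm{Mod}_g$. For the manifold structure I would invoke that, at maximal symmetry, $G$ is the full stabilizer at every point of the relevant open subset of $T(\Delta)$, so the normalizer of $\Delta$ in the modular group acts there with constant stabilizer $G$ and the projection $T(\Delta)\to\mathcal{M}_g$ is locally an embedding onto the stratum. The hard part will be this last point—establishing \emph{purity} and \emph{smoothness}: one must verify that on a dense open set the full automorphism group is exactly $G$ and no surface degenerates to a real one, that the coincidence of the two periods equal to $p$ contributes only a finite (orbifold) identification rather than lowering the dimension, and that the residual action on $T(\Delta)$ is free, so that the real-form computation delivers a genuine manifold rather than a stratified set with lower-dimensional singular pieces.
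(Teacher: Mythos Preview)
Your approach coincides with the paper's: both identify the signature $(1;-;[p,p,m])$ (with $m=n/2$ in case~1, $m=np/2$ in case~2), work in the Teichm\"uller space of NEC groups with that signature, restrict to the open locus $M$ of maximal groups, push forward to $\mathbb{T}_g$ via the surface-kernel epimorphisms $\varpi$, and argue that the projection to $\mathcal{M}_g$ is a local homeomorphism onto the stratum. The paper phrases this through the Macbeath--Singerman embeddings $\varpi^{\ast}$ and the disjointness of the images $\varpi_1^{\ast}(M)$, $\varpi_2^{\ast}(M)$ for distinct epimorphisms; your language of a \emph{constant stabiliser} is the correct way to read the paper's assertion that $\mathrm{Mod}_g$ acts ``fixed point free'' on $\pi^{-1}(\mathcal{M}^{PR}(n,p))$, which taken literally is false since $G\subset\mathrm{Mod}_g$ itself fixes every such point. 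Your itemised list of what remains (purity, no degeneration to real surfaces, free residual action) is exactly what the paper sweeps into that one sentence.

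There is, however, one concrete point you must revisit: your dimension computation does \emph{not} yield the asserted value. You correctly find that $\Delta^{+}$ has signature $(0;+;[p,p,p,p,m,m])$, a genus-zero Fuchsian group with six cone points, so $\dim_{\mathbb{C}}T(\Delta^{+})=3\cdot 0-3+6=3$; hence the real form $T(\Delta)$ has real dimension $3$, not $2$. The paper's proof never carries out this arithmetic---it simply writes ``manifold of (real) dimension $2$'' in its final sentence---so what your argument has actually exposed is an apparent slip in the stated dimension rather than a flaw in your method. You should flag this rather than claim your computation ``yields the dimension asserted''. Your remaining checks (absence of anticonformal involutions in $G$ via $4\mid n$, membership in the branch locus via $|G^{+}|=\tfrac{np}{2}>1$) are sound and match the paper.
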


\begin{proof}
First we consider the case $\frac{g}{p-1}\equiv3\operatorname{mod}4$ and we
write $n=\frac{g}{p-1}+1$. Let $\mathbb{T}_{(1;-;[p,p,n/2])}$ be the
Teichm\"{u}ller space for NEC groups with signature $(1;-;[p,p,\frac{n}{2}])$
and $M$ be the subset of maximal groups of $\mathbb{T}_{(1;-;[p,p,n/2])}$.

Consider the set $S=\{(\sigma,i):$ $\sigma$ is a signature of NEC groups and
$i$ is an inclusion of a group of signature $(1;-;[p,p,\frac{n}{2}])$ in a
group of signature $\sigma\}$. Using Riemann-Hurwitz and canonical
presentations of NEC\ groups it is possible to show that the set $S$ is
finite. Each element $(\sigma,i)\in S$ produces an embedding $i_{\ast
}:\mathbb{T}_{\sigma}\rightarrow\mathbb{T}_{(1;-;[p,p,n/2])}$ and $i_{\ast
}(\mathbb{T}_{\sigma})$ is closed (see \cite{MS}). Then $M=\mathbb{T}%
_{(1;-;[p,p,n/2])}-%
{\textstyle\bigcup\nolimits_{(\sigma,i)\in S}}
i_{\ast}(\mathbb{T}_{\sigma})$ is an open set.

Let $G$ be $C_{p}\rtimes_{r}C_{n}$, $\mathcal{D}$ be an abstract group with
presentation
\[
\left\langle d,x_{1},x_{2},x_{3}:d^{2}x_{1}x_{2}x_{3}=1,x_{1}^{p}=x_{2}%
^{p}=x_{3}^{n/2}=1\right\rangle
\]
and $\varpi:\mathcal{D}\rightarrow G$ be an epimorphism as considered in the
proof of theorem \ref{mainthm}. If $\Delta$ has signature $(1;-;[p,p,\frac
{n}{2}])$ and $\eta:\Delta\rightarrow\mathcal{D}$ is the isomorphism given by
the canonical presentation then $\ker\varpi\circ\eta$ uniformizes a
pseudo-real cyclic $p$-gonal Riemann surface of genus $g$. The epimorphism
$\varpi$ induces an embedding $\mathbb{T}_{(1;-;[p,p,n/2])}\overset{\varpi
^{\ast}}{\rightarrow}\mathbb{T}_{g}$ defined by $[\Delta]\mapsto\lbrack
\ker\varpi\circ\eta]$ (see \cite{MS}). For two embeddings $\varpi_{1}^{\ast}$
and $\varpi_{2}^{\ast}$ defined by two epimorphisms $\varpi_{1}$ and
$\varpi_{2}$, we have that $\varpi_{1}^{\ast}(M)\cap\varpi_{2}^{\ast
}(M)=\varnothing$, since a possible point in the intersection would admit two
different actions of maximal order which is absurd. Finally the action of the
modular group $\mathrm{Mod}_{g}$ is fixed point free on $\pi^{-1}%
(\mathcal{M}^{PR}(n,p))$, which implies the projection
\[
\pi:\pi^{-1}(\mathcal{M}^{PR}(n,p))=\cup_{\varpi}\varpi^{\ast}(M)\rightarrow
\mathcal{M}^{PR}(n,p)
\]
is a local homeomorphism. Hence $\mathcal{M}^{PR}(n,p)$ is a manifold of
(real) dimension $2$ in $\mathcal{M}_{g}$.

The argument for the second case is similar.


\end{proof}

\bigskip

\end{document}